\newlength{\dinwidth}
\newlength{\dinmargin}
\DeclareFontShape{OT1}{cmss}{b}{n}{<->ssub * cmss/bx/n}{}
\tikzstyle directed=[postaction={decorate,decoration={markings,    mark=at position #1 with {\arrow{>}}}}]
\tikzstyle rdirected=[postaction={decorate,decoration={markings,   mark=at position #1 with {\arrow{<}}}}]
\tikzset{anchorbase/.style={baseline={([yshift=-0.5ex]current bounding box.center)}}}
\tikzset{anchorbase/.style={baseline={([yshift=-0.5ex]current bounding box.center)}}}
\DeclareMathAlphabet{\pazocal}{OMS}{zplm}{m}{n}
\newcommand{\Z}{\mathbb{Z}}
\def\mod {\mathrm{mod}}
\def\max{\mathrm{max}}
\newtheorem{theorem}{Theorem}[section]
\newtheorem{proposition}[theorem]{Proposition}
\newtheorem{corollary}[theorem]{Corollary}
\newtheorem{lemma}[theorem]{Lemma}
\newtheorem{remark}[theorem]{Remark}
\newtheorem{example}[theorem]{Example}
\newtheorem{definition}[theorem]{Definition}
\newenvironment{proof}[1][Proof]{\noindent\textit{#1.} }{\hfill$\Box$\newline\medskip}
\numberwithin{equation}{section}
\author[1,3]{Vladimir Dragovi\'c}
\author[2,3]{Marko Sto\v si\'c}
\affil[1]{\textsc{The University of Texas at Dallas, Department of Mathematical Sciences}}
\affil[2]{\textsc{CAMGSD, Department of Mathematics, Instituto Superior T\'ecnico, Lisbon}}
\affil[3]{\textsc{Mathematical Institute SANU, Belgrade}}
\affil[1 ]{\texttt{vladimir.dragovic@utdallas.edu}}
\affil[2 ]{\texttt{mstosic@isr.ist.utl.pt}}
\date{}
\title{Billiard Partitions, Fibonacci Sequences, SIP Classes, and Quivers}
\begin{document}

\maketitle

\begin{abstract}
Starting from billiard partitions which arose recently in the description of periodic trajectories of ellipsoidal billiards in $d$-dimensional Euclidean space, we introduce a new type of separable integer partition classes, called type B. We study the numbers of basis partitions with $d$ parts and relate them to the Fibonacci sequence and its natural generalizations. Remarkably, the generating series of basis partitions can be related to the quiver generating series of symmetric quivers corresponding to the framed unknot via knots-quivers correspondence, and to the count of Schr\"oder paths.

\

\noindent
\textsc{MSC2010 numbers}: \texttt{05A15, 05A17, 14H70, 37J35, 16G20, 26C05}
\newline
\textsc{Keywords}: Billiard partitions; SIP classes; type A SIP classes; type B SIP classes; basis partitions; Fibonacci sequence; Lucas sequences; quivers; Donaldson-Thomas invariants; Schr\"oder paths.

\end{abstract}
	
\tableofcontents

\section{Introduction}
The billiard partitions arose recently \cite{DragRadn2018} in the description of periodic trajectories of ellipsoidal billiards in $d$-dimensional Euclidean and pseudo-Euclidean spaces.  Such partitions uniquely codify the sets of caustics, up to their types, which generate
periodic trajectories. The billiard partitions were then studied in \cite{ADR2021} from a more combinatorial point of view. Developing these ideas from \cite{DragRadn2018} and \cite{ADR2021} further and applying them back to the theory of partitions, in \cite{Andrews2021} a notion of separable integer partitions class (SIP) was defined.

In this paper we introduce a new type of separable integer partitions classes, called type B, which are motivated by the count of basis billiard partitions with $d$ parts. We study the numbers of such partitions and relate them to the Fibonacci sequence and its natural generalizations.

Remarkably, the generating series of basis partitions can be related to the quiver generating series of symmetric quivers corresponding to the framed unknot via knots-quivers correspondence, and to the count of Schr\"oder paths. Quiver generating series and corresponding motivic Donaldson-Thomas invariants appear in \cite{KS}. In the case of a symmetric quiver $Q$, the motivic Donaldson-Thomas invariants can be interpreted as the intersection Betti numbers of the moduli space of all semisimple representations of $Q$ \cite{MR}, or as the Chow-Betti numbers of the moduli space of all simple representations of $Q$ \cite{FR}. Donaldson-Thomas invariants can also be combinatorially constructed in terms of Hilbert schemes \cite{rei2}. In addition, for symmetric quivers it turns out that the motivic Donaldson-Thomas invariants are non-negative integers \cite{E}.

Furthermore, the so-called knots-quivers correspondence was postulated in \cite{KRSS}, stating that for every knot there exists a symmetric quiver such that the generating series of symmetrically colored HOMFLY-PT polynomials of a knot equals, up to variable specializations, the quiver generating series of the corresponding quiver. This deep property  has applications in the enumerative combinatorics, including Fuss-Catalan numbers, and counting of Schr\"oder paths \cite{PSS}, as well as combinatorial construction of invariants counting BPS states for a given knot \cite{KucSul}. Knots-quivers correspondence has also been interpreted in terms of counts of holomorphic curves \cite{EKL}.\\

In Section \ref{sec:Billiard} we review the basics of billiard partition. In Section \ref{sec:SIPA} we recall the definition of the separable integer partitions class, and in Section \ref{sec:SIPB} we introduce separable integer partitions classes of type B. After stripping off technical details, in Section \ref{quivers} we show a surprising result that the generating series of basal billiard partitions matches the quiver generating series of a certain two-node quiver, which in turn is directly related to the count of Schr\"oder paths, and to the framed unknot.

\section{Billiard Partitions}\label{sec:Billiard}
As we mentioned above, billiard partitions uniquely codify the sets of caustics, up to their types, which generate
periodic trajectories.
The period of  a periodic trajectory is the largest part while the winding numbers are the remaining summands of the corresponding partition. The number of parts is equal $d$, which is the dimension of the ambient space. It was proven in \cite{DragRadn2018}, that a  partition $m_0>m_1>\dots>m_{d-1}{>0(=m_d)}$, with $m_{d-1}$ even and not two consecutive $m_i, m_{i+1}$ being both odd, uniquely determines the set of caustics of a given type, which generate periodic trajectories of period $m_0$ and winding numbers $m_i$.  We will refer to such partitions as the Euclidean billiard partitions.

In order to take into account the types of caustics as well, the weighted partitions were introduced, to count the number of possibilities for types of the caustics compatible with the given partition. The following weight function $\phi (n, d, \pi)$ for a given Euclidean billiard partition $\pi$ of length $d$ with the largest part equal $n$ was defined in \cite{ADR2021}:
\begin{align}\label{eq:weights1}
\phi(2m, d, \pi)&=2^{d-1-2s};\\
\phi(2m+1, d, \pi)&=2^{d-2s},
\label{eq:weights2}
\end{align}
where $s$ is the total number of odd parts in $\pi$.

A formal definition of Euclidean billiard partitions, {introduced} in \cite{DragRadn2018} and \cite{ADR2021}, {is given in} Definition \ref{def:ebp} and first examples below  in Example \ref{ex:partnumber}.
Closed forms for the generating functions of such partitions were provided in \cite{ADR2021}, see also Theorem \ref{th:generatingE} below.

\begin{example}
Let $d=2$. Then $\phi(2m+1, 2, \pi)=1$ and $\phi( 2m, 2, \pi)=2$.
\end{example}
\begin{example} Let $d=3$.
For $n=4$, the only possible partition is $\pi=(4, 3, 2)$ and we have $\phi(4, 3, \pi)=1$.

For $n=5$, again there is only one partition $\pi=(5, 4, 2)$, for which $\phi(5, 3, \pi)=2$.

For $n=6$, the partitions are:
$$
\pi_1=(6, 5, 4), \quad
\pi_2=(6, 5, 2), \quad
\pi_3=(6, 4, 2), \quad
\pi_4=(6, 3, 2),
$$
and we have:
$$
\phi(6, 3, \pi_1)=\phi(6, 3, \pi_2)=\phi(6, 3, \pi_4)=1,
\quad
\phi(6, 3, \pi_3)=4.
$$

For $n=7$, the partitions are:
$$
\pi_1=(7, 6, 4), \quad
\pi_2=(7, 6, 2), \quad
\pi_3=(7, 4, 2),
$$
with
$$
\phi(7, 3, \pi_1)=\phi(7, 3, \pi_2)=\phi(7, 3, \pi_3)=2.
$$

\end{example}

We will remind a formal definition of Euclidean billiard partitions.
\begin{definition}[\cite{DragRadn2018}, \cite{ADR2021}]\label{def:ebp}
Let $\mathcal D$  denote the set of all integer partitions into distinct parts where
\begin{itemize}
\item [(E1)] the smallest part is even;
\item [(E2)] adjacent parts are never both odd.
\end{itemize}
We will call elements of $\mathcal D$ the Euclidean billiard partitions.
\end{definition}
Let $p_{\mathcal D}(n)$ denote the number of partitions of $n$ that are in $\mathcal D$.
\begin{align*}
1+\sum_{n\ge1}p_{\mathcal D}(n)q^n = 1&+q^2+q^4+q^5+2q^6+q^7+2q^8+3q^9+3q^{10}\\&+4q^{11}+4q^{12}+6q^{13}
+5q^{14}+9q^{15}+\dots .
\end{align*}
The Euclidean billiard partitions for $n=15$ are given in the following example.
\begin{example}\label{ex:partnumber}
Thus, $p_{\mathcal D}(15)=9$, and the nine partitions of $15$ are $13+2$, $11+4$, $10+3+2$, $9+6$, $9+4+2$, $8+5+2$, $7+6+2$, $6+5+4$, $6+4+3+2$.
\end{example}
Additionally, we shall also need to consider weighting the partitions in $\mathcal D$ as follows. Suppose $\pi \in \mathcal D$ and that $\pi$ has $d$ parts with largest part $n$ and $s$ odd parts. The weight $\phi(n, d, \pi)$ is given by \eqref{eq:weights1} and \eqref{eq:weights2}.

Let  $p_{\mathcal D}(m, n)$ denote the number of partitions of $n$ in $\mathcal D$ with weight $m$. Then:
\begin{align*}
1+\sum_{n\ge1}p_{\mathcal D}(m, n)x^mq^n = 1&+q^2+q^4+q^5+(1+x)q^6+q^7+(1+x)q^8+3q^9\\&+(1+2x)q^{10}+(3+x)q^{11}+(1+2x+x^2)q^{12}\\&+(5+x)q^{13}
+(2+3x+x^2)q^{14}+(6+3x)q^{15}+\dots .
\end{align*}
Referring back to Example \ref{ex:partnumber}, we see that three partitions of $15$ have weight $1$, namely $2+4+9$, $2+6+7$, and $2+3+4+6$ while the remaining six have weight $0$. Thus yielding $(6+3x)$ as the coefficient of $q^{15}$.

One {objective} of \cite{ADR2021} was to provide a closed form for these generating functions. A subset $\mathcal B$ of $\mathcal D$ was identified such that $\pi \in  {\mathcal B}$ if no summand of $\pi$ can be reduced by $2$ with the resulting partition remaining in ${\mathcal D}$. For example $2+4+7$ is not in  ${\mathcal B}$ because
$2+4+(7-2)=2+4+5$ is still in ${\mathcal D}$. On the other hand $2+4+5$ is in ${\mathcal B}$ because $2+4+3$ destroys the order of the parts. The elements of  set ${\mathcal B}$ can be characterized as follows.

\begin{lemma} [\cite{ADR2021}] The set of partitions ${\mathcal B}$ consists of those elements of ${\mathcal D}$   where
\begin{itemize}
\item [(BE1)] the smallest part is $2$;
\item [(BE2)] adjacent parts are never both odd;
\item[(BE3)] the difference between adjacent parts is less or equal 2.
\end{itemize}
\end{lemma}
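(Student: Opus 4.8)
The plan is to show the two characterizations of $\mathcal{B}$—the ``irreducibility'' definition (no summand can be decreased by $2$ while staying in $\mathcal{D}$) and the explicit conditions (BE1)--(BE3)—are equivalent, by proving each direction. Since every element of $\mathcal{B}$ lies in $\mathcal{D}$, conditions (E1) and (E2) hold automatically for any $\pi \in \mathcal{B}$; condition (BE2) is literally (E2), so the real content is to extract (BE1) and (BE3) from irreducibility, and conversely to check that a partition satisfying (BE1)--(BE3) is genuinely irreducible.

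First I would prove ``irreducible $\Rightarrow$ (BE1) and (BE3).'' Write $\pi = (m_0 > m_1 > \dots > m_{d-1})$ with $m_{d-1}$ even. For (BE1): if $m_{d-1} \geq 4$, then decreasing the smallest part by $2$ gives $(m_0 > \dots > m_{d-2} > m_{d-1}-2)$; the new smallest part $m_{d-1}-2 \geq 2$ is still even and positive, the other parts are unchanged so no new pair of adjacent odds is created (the pair $m_{d-2}, m_{d-1}-2$ has $m_{d-1}-2$ even), and distinctness/order is preserved because $m_{d-2} > m_{d-1} > m_{d-1}-2$. Hence the reduced partition is in $\mathcal{D}$, contradicting irreducibility; so $m_{d-1}=2$. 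For (BE3): suppose $m_{i-1} - m_i \geq 3$ for some $i$; I would show $\pi' = (\dots > m_{i-1} > m_i + 2 > m_{i+1} > \dots)$ (for $1 \le i \le d-1$) lies in $\mathcal{D}$. Order is fine since $m_{i-1} > m_i+2 > m_i > m_{i+1}$ (using the gap $\geq 3$ above and $>$ below). The smallest part is unchanged (as $i \leq d-1$ and, when $i = d-1$, replacing $m_{d-1}=2$ by $4$ would change the smallest part—so I must handle $i=d-1$ separately, but there the gap condition combined with $m_{d-1}=2$ forces looking at whether $m_{d-2}\ge 5$, and decreasing $m_{d-2}$ by $2$ instead does the job). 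The parity condition (E2) is the delicate point: changing $m_i$ to $m_i+2$ preserves its parity, so the only pairs whose status could change are... none, since parities of all parts are unchanged. Thus $\pi' \in \mathcal{D}$, again contradicting irreducibility. So adjacent differences are $\leq 2$.

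Next I would prove the converse, ``(BE1)--(BE3) $\Rightarrow$ irreducible.'' Take $\pi$ with smallest part $2$, adjacent differences $\leq 2$, and no adjacent odd pair. Suppose for contradiction some part $m_i$ can be decreased by $2$ with the result still in $\mathcal{D}$. If $i = d-1$, then $m_{d-1}=2$ becomes $0$, which violates positivity (the partition is into positive parts), so this is impossible. If $i < d-1$, then after the change we need $m_i - 2 > m_{i+1}$, i.e.\ $m_i - m_{i+1} \geq 3$, contradicting (BE3). Hence no part can be reduced, so $\pi \in \mathcal{B}$. This direction is short; the bookkeeping all sits in the forward direction.

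The main obstacle is the boundary case $i = d-1$ in the ``(BE3)'' part of the forward direction: increasing the smallest part $m_{d-1}=2$ to $4$ is not an admissible move for testing irreducibility of a \emph{different} part, so when the large gap occurs at the bottom I have to argue via reducing $m_{d-2}$ instead and check that this does not create an adjacent odd pair with $m_{d-3}$—this is where conditions (E2)/(BE2) get used in an essential way, and it may require splitting on the parity of $m_{d-2}$. Once that case is dispatched, everything else is the routine verification that the three defining properties of $\mathcal{D}$ (distinct parts, even smallest part, no adjacent odds) are stable under the specific $\pm 2$ perturbations considered. I would present the argument as two lemmatic implications with the parity bookkeeping isolated into a short sub-claim.
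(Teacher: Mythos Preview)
The paper does not actually prove this lemma here---it is quoted from \cite{ADR2021} without argument---so there is no ``paper's own proof'' to compare against. That said, your proposal contains a genuine logical slip in the forward direction for (BE3).

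Irreducibility of $\pi$ means: no part of $\pi$ can be \emph{decreased} by $2$ and remain in~$\mathcal D$. When you assume a gap $m_{i-1}-m_i\ge 3$ and form $\pi'$ by replacing $m_i$ with $m_i+2$, you are \emph{increasing} a part. Even if $\pi'\in\mathcal D$, this says nothing about the irreducibility of $\pi$; it only shows that $\pi'$ is reducible. So the sentence ``Thus $\pi'\in\mathcal D$, again contradicting irreducibility'' is not a valid deduction, and the elaborate boundary case you flag (increasing $m_{d-1}=2$ to $4$, then switching to reducing $m_{d-2}$, worrying about the parity of $m_{d-3}$) is an artifact of this wrong operation.

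The fix is the one you stumble onto in the boundary case, applied uniformly: if some adjacent gap satisfies $m_j-m_{j+1}\ge 3$ (with $0\le j\le d-2$), reduce $m_j$ by $2$. Then $m_{j-1}>m_j>m_j-2$ and $m_j-2\ge m_{j+1}+1>m_{j+1}$, so distinctness holds; the smallest part $m_{d-1}$ is untouched, so (E1) holds; and subtracting $2$ preserves parity, so (E2) holds for the pairs $(m_{j-1},m_j-2)$ and $(m_j-2,m_{j+1})$ exactly as before. Hence the reduced partition lies in $\mathcal D$, contradicting irreducibility. No boundary case or parity sub-claim is needed. Your (BE1) argument and your converse direction are correct as written.
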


The set ${\mathcal B}$ is called {\it the basis} of  $\mathcal D$  and its elements are called {\it basal} because of the following fundamental property:
\begin{corollary}[\cite{ADR2021}]\label{corollary:decomp} Every partition  $\pi\in \mathcal D$ with $d$ parts can be uniquely represented by $\pi_1+\pi_2$ where $\pi_1\in {\mathcal B}$ and $\pi_2$ is a partition with not more than $d$ parts each even. Conversely, if a partition is represented as a sum $\pi_1+\pi_2$ where $\pi_1\in {\mathcal B}$ and $\pi_2$ is a partition with not more than $d$ parts each even, then it belongs to $\mathcal D$.
\end{corollary}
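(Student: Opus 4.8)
\emph{Proposal.} The plan is to exploit the observation that the basis part $\pi_1$ is forced by a parity constraint. Recall that in a representation $\pi=\pi_1+\pi_2$ one adds the two partitions term by term, with $\pi_1$ carrying $d$ parts and $\pi_2$ padded with zeros; so $\pi_1\in\mathcal B$ has exactly $d$ parts. Writing $\pi=(m_0>\dots>m_{d-1})$ and $\pi_1=(a_0>\dots>a_{d-1})$, the requirement that every part of $\pi_2$ be even means $m_i\equiv a_i\pmod 2$ for all $i$; hence $\pi_1$ must be an element of $\mathcal B$ with $d$ parts whose parity sequence $(\epsilon_0,\dots,\epsilon_{d-1})$ equals that of $\pi$, a sequence which by (E1) and (E2) ends in an even entry and has no two consecutive odd entries. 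I would first prove a sublemma: for each such admissible parity sequence there is exactly one partition in $\mathcal B$ with $d$ parts realizing it. This partition is built from the bottom up: (BE1) forces $a_{d-1}=2$, and given $a_{i+1}$, conditions (BE2), (BE3) together with the prescribed parity $\epsilon_i$ leave a single admissible value for $a_i$ — namely $a_{i+1}+1$ when $a_{i+1}$ is odd or $\epsilon_i$ is odd, and $a_{i+1}+2$ otherwise. In particular $a_i-a_{i+1}=2$ occurs only when $a_i$ and $a_{i+1}$ are both even. The sublemma immediately yields uniqueness of $\pi_1$, hence of $\pi_2=\pi-\pi_1$, in any valid decomposition.

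For existence I would take $\pi_1$ to be the canonical element of $\mathcal B$ with $d$ parts and the same parity sequence as $\pi$, and set $b_i:=m_i-a_i$. All $b_i$ are even, so it remains to check $b_i\ge 0$ and $b_i\ge b_{i+1}$. Nonnegativity follows by downward induction on $i$: $m_{d-1}\ge 2=a_{d-1}$ since $m_{d-1}$ is a positive even integer, and if $m_{i+1}\ge a_{i+1}$ then $m_i>m_{i+1}\ge a_{i+1}$ gives $m_i\ge a_{i+1}+1=a_i$ unless $a_i=a_{i+1}+2$; but in that case $a_i$, $a_{i+1}$ and $m_i$ are all even, so $m_i\ge a_{i+1}+2=a_i$. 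Weak monotonicity is the inequality $m_i-m_{i+1}\ge a_i-a_{i+1}$: it is automatic when $a_i-a_{i+1}=1$ because the parts $m_j$ are distinct, and when $a_i-a_{i+1}=2$ the parts $m_i,m_{i+1}$ are both even and distinct, so $m_i-m_{i+1}\ge 2$. Hence $\pi_2=(b_0,\dots,b_{d-1})$ is a partition with at most $d$ even parts and $\pi=\pi_1+\pi_2$.

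The converse is a direct check of the three defining properties of $\mathcal D$ for $\pi:=\pi_1+\pi_2$, where $\pi_1\in\mathcal B$ has $d$ parts and $\pi_2$ has at most $d$ even parts: the parts of $\pi$ are distinct since a strictly decreasing sequence plus a weakly decreasing one is strictly decreasing; the smallest part is $2$ plus the smallest part of $\pi_2$, hence even; and no two adjacent parts of $\pi$ are both odd because this holds for $\pi_1$ (as $\pi_1\in\mathcal B\subseteq\mathcal D$) and adding even numbers preserves parity.

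I expect the crux to be the existence half: pinning down the canonical form of the $\mathcal B$-element attached to a parity pattern and then verifying that subtracting it from $\pi$ never produces a negative part or destroys monotonicity. All the genuine content sits in the case analysis $a_i-a_{i+1}\in\{1,2\}$, which interlocks the parity pattern with conditions (BE2) and (BE3); everything else reduces to bookkeeping against the characterization of $\mathcal B$ in the preceding lemma.
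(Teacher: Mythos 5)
Your argument is correct and complete. Note that this paper does not actually prove the corollary: it is quoted from \cite{ADR2021}, so there is no in-text proof to compare against. Your route --- showing that an element of $\mathcal B$ with $d$ parts is uniquely reconstructible from its parity sequence (bottom-up from the forced smallest part $2$, with (BE2)--(BE3) pinning each difference to $1$ or $2$), then subtracting the canonical basal partition with the parity pattern of $\pi$ and checking nonnegativity and weak monotonicity of the even remainder --- is exactly the natural mechanism behind the cited result, and it is consistent with how the paper uses it (the example $9+4+2=(5+4+2)+(4+0+0)$ and the remark that $\pi$ and $\bar\pi$ have the same number of odd parts). One point worth making explicit, which you do state but should flag as a convention rather than a deduction: uniqueness requires reading the sum as in the SIP definition (ii), i.e.\ $\pi_1$ has exactly $d$ parts and $\pi_2$ is padded with zeros aligned at the small end; without that normalization one gets spurious representations such as $(4,2)=(2)+(2,2)$. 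With that reading fixed, your case analysis $a_i-a_{i+1}\in\{1,2\}$ (difference $2$ only between two even parts, so that $m_i\ge a_{i+1}+2$ and $m_i-m_{i+1}\ge 2$ in that case) closes both the existence and uniqueness halves, and the converse is the routine parity check you give.
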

\begin{example}
Consider $9+4+2\in \mathcal D$: $ 9+4+2=(5+4+2)+(4+0+0).$
\end{example}

One can notice that $\pi$ and its basal partition $\bar \pi$ have the same weight:
$$
\phi (n, d, \pi)=\phi (n, d, \bar \pi),
$$
since they have the same number of odd parts.

Following \cite{ADR2021}, let us denote by $s(d, n)$ the generating function for those partitions in ${\mathcal B}$ that have exactly $d$ parts and largest part equal $n$.

\begin{example} {We calculate, directly from the definition:}
\begin{align*}
s(5, 8) =&\,\,\, x^2q^{23}+x^2q^{25}+x^2q^{27}=\\
=&\,\, x^{5-1-2\cdot 1}q^{2+3+4+6+8}+x^{5-1-2\cdot 1}q^{2+4+5+6+8}+ x^{5-1-2\cdot 1}q^{2+4+6+7+8}.
\end{align*}
\end{example}
Let us recall the Gaussian polynomials or $q$-binomial coefficients:
\begin{equation*}
\left[
\begin{array}{l}
A\\
B
\end{array}
\right]_q=
\begin{cases}
 0, & \text{if } B<0 \text{ or } B>A\\
\frac{(q;q)_A}{(q;q)_B(q;q)_{A-B}}, & 0\le B\le A
\end{cases}
\end{equation*}
and $(x;q)_N=(1-x)(1-xq)\cdots(1-xq^{N-1}).$

\begin{lemma}[\cite{ADR2021}]\label{lemma:sdn} The functions $s(d, m)$ can be expressed as follows, depending on parity of $m$:
\begin{itemize}
\item[a)]\begin{equation}\label{eq:d2n}
s(d, 2n)= x^{2n-d-1}q^{2n^2-2dn-n+d^2+2d}\left[
\begin{array}{c}
n-1\\
2n-d-1
\end{array}
\right]_{q^2};
\end{equation}
\item[b)]\begin{equation}\label{eq:d2n1}
s(d, 2n+1)=  x^{2n-d}q^{2n^2-2dn+d^2+3n}\left[
\begin{array}{c}
n-1\\
2n-d
\end{array}
\right]_{q^2}.
\end{equation}
\end{itemize}
\end{lemma}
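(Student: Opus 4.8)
The plan is to convert conditions (BE1)--(BE3) into an explicit description of the basal partitions with exactly $d$ parts and largest part $m$, and then to read off $s(d,m)$ directly as a sum over a single free binary choice, namely which odd numbers to include. I expect that, once that description is in hand, the whole computation reduces to one application of the standard Gaussian-binomial evaluation of a subset-sum generating function.

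First I would prove the structural fact that in any $\pi\in\mathcal B$ two consecutive even parts differ by exactly $2$. By (BE1) the smallest part is $2$, which is even. If $e'<e$ are even parts of $\pi$ with no even part strictly between them, then at most one part lies strictly between them, since two or more parts between them would all be odd and hence would contain two adjacent odd parts, contradicting (BE2). If no part lies between $e'$ and $e$, then $e-e'\le 2$ by (BE3) and $e-e'$ is a positive even integer, so $e-e'=2$; if a single (necessarily odd) part $o$ lies between them, then $o-e'$ and $e-o$ are positive odd integers that are $\le 2$ by (BE3), hence both equal $1$, so again $e-e'=2$. Consequently the set of even parts of $\pi$ equals $\{2,4,\dots,2k\}$ for some $k\ge 1$. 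Conversely, for any $k\ge 1$ and any subset $T\subseteq\{3,5,\dots,2k+1\}$ the partition with parts $\{2,4,\dots,2k\}\cup T$ lies in $\mathcal B$: conditions (BE1) and (BE3) are immediate, and each odd part $2j+1$ with $j<k$ has both neighbours $2j$ and $2j+2$ present and even, while the odd part $2k+1$, if present, is the largest part and has even neighbour $2k$, so (BE2) holds. Thus the basal partitions are exactly these sets.

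Next I would split according to the parity of $m$. If $m=2n$, the largest even part is $2n$, so $k=n$ and $T$ is an arbitrary subset of $\{3,5,\dots,2n-1\}$ with $|T|=d-n$ (so that there are $d$ parts in all); since the number of odd parts equals $|T|$, the weight exponent is $d-1-2|T|=2n-d-1$, which supplies the power of $x$ in \eqref{eq:d2n}, and the out-of-range cases $d<n$ or $d>2n-1$ correspond exactly to the vanishing of the Gaussian binomial. If $m=2n+1$, then again $k=n$, $2n+1\in T$, and $T\setminus\{2n+1\}$ is an arbitrary subset of $\{3,5,\dots,2n-1\}$ of size $d-n-1$; now there are $d-n$ odd parts, the weight exponent is $d-2(d-n)=2n-d$, and the range $n+1\le d\le 2n$ again matches the support of the binomial.

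Finally, writing each chosen odd part as $2a+1$, the $q$-part of $s(d,m)$ equals $q^{(\text{sum of the even parts})+(\text{number of chosen odd parts})}$ times $\sum_{S}q^{2\sum_{a\in S}a}$, where $S$ runs over the subsets of $\{1,\dots,n-1\}$ of the relevant size $r$. Using $\sum_{|S|=r}q^{2\sum_{a\in S}a}=q^{r(r+1)}\left[\begin{array}{c}n-1\\r\end{array}\right]_{q^2}$ (the minimal such subset sum being $1+\cdots+r$), with $r=d-n$ in case (a) and $r=d-n-1$ in case (b), together with the symmetry $\left[\begin{array}{c}n-1\\r\end{array}\right]_{q^2}=\left[\begin{array}{c}n-1\\\,n-1-r\,\end{array}\right]_{q^2}$, formulas \eqref{eq:d2n} and \eqref{eq:d2n1} drop out after collecting the exponents of $q$, which is a routine calculation. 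The only genuinely substantive step is the structural claim of the second paragraph (in particular the observation that the odd parts may be chosen completely freely, so the sum factorizes); everything after it is bookkeeping, including the verification that the no-basal-partition cases coincide with the supports of the Gaussian polynomials.
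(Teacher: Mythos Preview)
Your argument is correct. The structural characterisation of $\mathcal B$ that you prove in the second paragraph --- even parts equal to $\{2,4,\dots,2k\}$ and odd parts an arbitrary subset of $\{3,5,\dots,2k+1\}$ --- is exactly right, and once it is established the evaluation via the subset--sum identity $\sum_{|S|=r}q^{2\sum_{a\in S}a}=q^{r(r+1)}\left[\begin{smallmatrix}n-1\\r\end{smallmatrix}\right]_{q^2}$ goes through as you say. One minor point of presentation: in case~(b) the fixed odd part $2n+1$ contributes an extra $q^{2n+1}$ that does not fit cleanly into your phrase ``$q^{(\text{sum of the even parts})+(\text{number of chosen odd parts})}$ times $\sum_S q^{2\sum a}$ with $S\subseteq\{1,\dots,n-1\}$''; in that case the prefactor should read $q^{n(n+1)+(2n+1)+(d-n-1)}=q^{n^2+2n+d}$ and the sum is over $S'\subseteq\{1,\dots,n-1\}$ of size $d-n-1$. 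With that adjustment the exponent collection indeed yields $2n^2-2dn+d^2+3n$, matching \eqref{eq:d2n1}.

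Your route is genuinely different from the one the paper follows in Section~3.3. There the authors (setting $x=1$) start from the recursions $s(d,2n)=q^{2n}(s(d-1,2n-2)+s(d-1,2n-1))$ and $s(d,2n+1)=q^{2n+1}s(d-1,2n)$, pass to the generating function $s_n(z)=\sum_d s(d,2n)z^d$, obtain the first--order relation $s_n(z)=q^{2n}z(1+q^{2n-1}z)s_{n-1}(z)$, solve it as a product, and then expand via the $q$--binomial theorem. What you do instead is bijective: you identify the full shape of a basal partition up front, so that the free choice (which odd numbers to insert) is visible immediately and the Gaussian binomial appears in a single step rather than after unwinding a recursion. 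Your approach also keeps the weight variable $x$ throughout, whereas the paper's computation specialises to $x=1$. Conversely, the recursive method of the paper generalises mechanically to the other type~B classes treated later (e.g.\ $\mathcal P_{3,2}$), where a clean structural description of the basis may be less obvious.
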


The key observation is that once we know the generating  function for the basal partitions, it is easy to get
the generating function for partitions in $\mathcal D$.

\begin{theorem}[\cite{ADR2021}]\label{th:generatingE} The generating function for the weighted Euclidean billiard partitions has the following formula:
\begin{equation*}
1+\sum_{n\ge1, m\ge 0}p_{\mathcal D}(m, n)q^n =1+\sum_{d=1}^\infty \sum_{n=0}^\infty \frac{s(d, n)}{(q^2;q^2)_d},
\end{equation*}
where
\begin{equation*}
s(d, 2n)= x^{2n-d-1}q^{2n^2-2dn-n+d^2+2d}\left[
\begin{array}{c}
n-1\\
2n-d-1
\end{array}
\right]_{q^2};
\end{equation*}
\begin{equation*}
s(d, 2n+1)= x^{2n-d}q^{2n^2-2dn-n+d^2+3n}\left[
\begin{array}{c}
n-1\\
2n-d
\end{array}
\right]_{q^2}.
\end{equation*}

\end{theorem}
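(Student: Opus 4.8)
The plan is to obtain the identity directly from the unique decomposition of Corollary~\ref{corollary:decomp} together with the closed forms of Lemma~\ref{lemma:sdn}; beyond that, only one elementary generating-function computation is needed. Write the left-hand side as $F(x,q)=1+\sum_{\pi\in\mathcal D}x^{w(\pi)}q^{|\pi|}$, where for a partition $\pi$ with $d$ parts, largest part $n$ and $s$ odd parts we let $w(\pi)$ be the exponent determined by $2^{w(\pi)}=\phi(n,d,\pi)$, so $w(\pi)=d-1-2s$ when $n$ is even and $w(\pi)=d-2s$ when $n$ is odd; the leading $1$ records the empty partition. Since $w(\pi)$ depends on $\pi$ only through $d$, $s$ and the parity of $n$, tracking the weight will be routine.

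First I would organise $\mathcal D$ according to the number of parts $d\ge1$. For fixed $d$, Corollary~\ref{corollary:decomp} gives a bijection $\pi\leftrightarrow(\pi_1,\pi_2)$ between the partitions in $\mathcal D$ with exactly $d$ parts and the pairs in which $\pi_1\in\mathcal B$ has exactly $d$ parts and $\pi_2$ is an \emph{arbitrary} partition with at most $d$ parts, all even. (That $\pi_1$ again has $d$ parts follows since a ``reduce-a-summand-by-$2$'' step never produces a nonpositive part, hence never deletes one.) I would then check that this bijection respects the statistics involved: $|\pi|=|\pi_1|+|\pi_2|$ by construction, and since $\pi$ arises from $\pi_1$ by adding even amounts to its parts, $\pi$ and $\pi_1$ share the number of odd parts and the parity of the largest part, whence $w(\pi)=w(\pi_1)$ --- this is precisely the remark made after Corollary~\ref{corollary:decomp}. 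Therefore, for each $d$, the part of $F(x,q)$ coming from $\pi\in\mathcal D$ with exactly $d$ parts factors as the product of $\sum x^{w(\pi_1)}q^{|\pi_1|}$ over $\pi_1\in\mathcal B$ with $d$ parts, and $\sum_{\pi_2}q^{|\pi_2|}$ over partitions into at most $d$ even parts.

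It remains to identify the two factors. Grouping the basal partitions with $d$ parts by their largest part $n$, the first factor is $\sum_{n\ge0}s(d,n)$ by the very definition of $s(d,n)$. For the second, a partition into at most $d$ even parts is exactly $2\mu$ for some partition $\mu$ into at most $d$ parts, so its generating function is $\prod_{j=1}^{d}(1-q^{2j})^{-1}=(q^2;q^2)_d^{-1}$. Multiplying and summing over $d\ge1$ gives
\[
F(x,q)=1+\sum_{d=1}^{\infty}\frac{1}{(q^2;q^2)_d}\sum_{n=0}^{\infty}s(d,n),
\]
and the explicit values of $s(d,2n)$ and $s(d,2n+1)$ are exactly Lemma~\ref{lemma:sdn}. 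The step one might expect to be the main obstacle --- establishing that the ``reduce-by-two'' procedure terminates in a \emph{unique} basal partition and that the accumulated remainder is an unrestricted partition into at most $d$ even parts --- is already supplied by Corollary~\ref{corollary:decomp}, so what is left here is only the bookkeeping above together with the elementary identity for the even-remainder generating function.
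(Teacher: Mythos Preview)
Your proof is correct and follows exactly the approach the paper indicates: the sentence just before the theorem (``The key observation is that once we know the generating function for the basal partitions, it is easy to get the generating function for partitions in $\mathcal D$'') together with Corollary~\ref{corollary:decomp}, the weight-preservation remark, and Lemma~\ref{lemma:sdn} is precisely your argument. The paper itself does not give a proof here but cites \cite{ADR2021}; your write-up simply makes explicit the decomposition $\pi\leftrightarrow(\pi_1,\pi_2)$, the factorisation of the $d$-part contribution, and the standard identity $\sum_{\pi_2}q^{|\pi_2|}=(q^2;q^2)_d^{-1}$ for partitions into at most $d$ even parts.
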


\section{Separable integer partitions}

\subsection{Separable integer partitions of type A}\label{sec:SIPA}

Formalizing further this line of {thought}, in \cite{Andrews2021} a notion of separable integer partitions class (SIP) with modulus $k$  was defined as a subset $\mathcal P$ of all the integer partitions
with a subset $\mathcal B$, called the basis of $\mathcal P$, with the properties:
\begin{itemize}
\item[(i)] for $n\ge 1$ the number of elements  of $\mathcal B$ with $n$ parts is finite;
\item[(ii)] every partition with $n$ parts in $\mathcal P$ can be uniquely presented in the form
\begin{equation}\label{eq:decomposition}
(b_1+p_1)+(b_2+p_2)+\dots+(b_n+p_n),
\end{equation}
$0<b_1\le b_2\le \dots \le b_n$ form a partition $b_1+b_2+\dots+b_n\in \mathcal B$, and $0\le p_1 \le p_2\le \dots \le p_n$ form a partition into $n$ parts, with the only restriction that all parts $p_j$ were divisible by $k$;
\item[(iii)] all the partitions of the form \ref{eq:decomposition} belong to $\mathcal P$.
\end{itemize}

In \cite{Andrews2021} an interesting set of SIP classes was introduced.  Let $\{c_1, c_2, \dots, c_k\}$ be a set of positive integers with $c_r\equiv r (\mod\, k)$ and $\{d_1, d_2, \dots, d_k \}$ be a set of nonnegative integers. Let $\mathcal P$ be the set of all integer partitions
$$
p_1+p_2+\dots+p_j
$$
where $0<p_1\le\dots\le p_j$ and for all $r$, $1\le r\le k$ and each $p_i$ if $p_i\equiv r (\mod\, k)$, then $p_i\ge c_r$ and if $i>1$
$$
p_i-p_{i-1}\ge d_r.
$$
Then $\mathcal P$ is a SIP class modulo $k$. We will say that such a class is of {\it type A}. Its basis $\mathcal B$ was described in Theorem 1 in \cite{Andrews2021}. It consists of all partitions
$$
b_1+\dots + b_j,
$$
where if $b_1\equiv r (\mod\, k)$, then $b_1=c_r$ and for $2\le i\le j$ if $b_i\equiv r (\mod\, k)$ then $d_r\le b_i-b_{i-1}<d_r+k$.

\begin{example} In relation with the G\"ollnitz-Gordon Theorem, in \cite{Andrews2021} the class $\mathcal P_{\mathcal G}$ of all partitions in which the difference between parts is at least 2 and at least 4 between even parts. Then $c_1=1$, $c_2=2$, $d_1=2$, $d_2=3$.
\end{example}
It was observed in \cite{Andrews2021} that billiard partitions do not belong to SIP classes of type A.

\subsection{Separable integer partitions of type B}\label{sec:SIPB}

Before we define a set of SIP classes which contains billiard partitions, we will study the numbers of basal Euclidean billiard partitions with a given number of  parts $d$.
Let
\begin{equation}\label{eq:fibonacci}
 f_0=1,\, f_1=1, \, f_2=2,\, \dots, f_d=f_{d-1}+f_{d-2}.
\end{equation}

\begin{theorem}\label{th:fibonacci}
\begin{itemize}
\item[(a)]
There are $f_d$ basal billiard partitions with $d$ parts.
\item[(b)] There are $f_{d-1}$ basal billiard partitions with $d$ parts with the largest part being an even number.
    \item[(c)] There are $f_{d-2}$ basal partitions with $d$ parts with the largest part being an odd  number.
        \end{itemize}
        \end{theorem}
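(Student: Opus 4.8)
The plan is to work directly from the characterization of $\mathcal{B}$ recorded in the Lemma above (conditions (BE1)--(BE3)). A basal partition with $d$ parts is a string $m_0>m_1>\dots>m_{d-1}$ with $m_{d-1}=2$, with every consecutive difference $m_i-m_{i+1}\in\{1,2\}$, and with no two consecutive parts both odd. I would encode it by the sequence of gaps read from the bottom, $g_i:=m_{d-i}-m_{d-i+1}\in\{1,2\}$ for $i=1,\dots,d-1$, and track parity along the way: one starts from the even value $m_{d-1}=2$, a gap $1$ flips parity and a gap $2$ preserves it. The key observation is that (BE2) fails exactly when a gap $2$ is taken on top of an odd part; hence from an odd part only a gap $1$ is admissible, whereas from an even part both gaps are.

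With this in hand I would prove (b) and (c) first, by a one-step recursion obtained by adjoining a new largest part. Writing $a_d$ (resp.\ $b_d$) for the number of basal partitions with $d$ parts whose largest part is even (resp.\ odd), a partition counted by $a_d$ admits an appended gap $1$ (new largest part odd) or gap $2$ (new largest part even), while one counted by $b_d$ admits only an appended gap $1$ (new largest part even); thus
\begin{equation*}
a_{d+1}=a_d+b_d,\qquad b_{d+1}=a_d,\qquad a_1=1,\quad b_1=0,
\end{equation*}
the base case coming from the unique one-part basal partition $(2)$. An immediate induction (with the conventions $f_{-1}=0$, $f_0=1$) gives $a_d=f_{d-1}$ and $b_d=f_{d-2}$, which is exactly (b) and (c); and then (a) is $a_d+b_d=f_{d-1}+f_{d-2}=f_d$, so the total count satisfies the defining recursion \eqref{eq:fibonacci} with $f_0=f_1=1$. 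Equivalently, and perhaps more vividly, the admissible gap sequences are in bijection with tilings of a $1\times(d-1)$ strip by monominoes (a gap $2$) and dominoes (a consecutive pair of gaps $1$), optionally followed by one extra trailing cell (a terminal gap $1$, allowed precisely because the largest part may be odd, there being nothing above it to violate (BE2)); the $f_{d-1}$ complete tilings give the even-top case and the $f_{d-2}$ tilings with a trailing cell give the odd-top case, for $f_{d-1}+f_{d-2}=f_d$ in all.

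There is no deep obstacle here; the only point demanding care is the parity bookkeeping together with the indexing. One must verify that the sole way (BE2) can be violated is ``a gap $2$ on top of an odd part'', treat the two boundary parts correctly (the smallest part is even by (BE1), which fixes the initial parity, while the largest part carries no parity constraint imposed from above), and set up the induction so that the Fibonacci numbers emerge as $f_d$, $f_{d-1}$, $f_{d-2}$ rather than shifted by one.
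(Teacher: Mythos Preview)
Your proof is correct and follows essentially the same approach as the paper: both argue by induction via a one-step extension that appends a new largest part, splitting into the three admissible cases (gap $1$ from an even top, gap $2$ from an even top, gap $1$ from an odd top) to derive the recursion $a_{d+1}=a_d+b_d$, $b_{d+1}=a_d$ and hence the Fibonacci count. Your gap-sequence encoding and the tiling bijection at the end are pleasant extras not present in the paper, but the core argument is the same.
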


 \begin{proof}  The proof goes by induction. One can directly verify it for partitions with $d=1, 2, 3$ parts.
The basal partitions with $d+1$ parts can be obtained from those with  $d$ parts in the following ways:
by adding to the list of parts an even number, the consecutive of the largest odd number,
for the partitions with an odd number as the largest part;
by adding to the list of parts an odd number, the consecutive of the largest even number,
for the partitions having an even number as the largest part;
by adding to the list of parts an even number, the consecutive even number of the
largest even number, for the partitions having an even number as the largest part.

Thus, the total number of basal partitions with $d+1$ parts is equal to
$$f_{d-2}+2f_{d-1} = f_d + f_{d-1}.$$

There are $f_{d-1}$ basal partitions with $d+1$ parts which have an odd number as the largest part, while there are
$$f_{d-2}+f_{d-1}=f_d$$
basal partitions with $d+1$ parts which have an even number as the largest part.
\end{proof}

\begin{theorem}\label{TyB} Let $\{c_1, c_2, \dots, c_k\}$ be a set of positive integers with $c_r\equiv r (\mod\, k)$ and $\{d_1, d_2, \dots, d_k \}$ be a set of nonnegative integers. Let $\mathcal P$ be the set of all integer partitions
$$
p_1+p_2+\dots+p_j
$$
where $0<p_1<\dots<p_j$ and for all $r$, $1\le r\le k$ and each $p_i$ if $p_i\equiv r (\mod\, k)$, then $p_i\ge c_r$ and if $i>1$
$$
p_i-p_{i-1}\ge \max\{1, d_r\}.
$$
Then $\mathcal P$ is a SIP class modulo $k$. Its basis $\mathcal B$ consists of all partitions
$$
b_1+\dots +b_j,
$$
where if $b_1\equiv r (\mod\, k)$ then $b_1=c_r$, and for $i\le j$ if $b_i\equiv r (\mod\, k)$ then  {{$b_i\ge c_r$ and}} $\max \{d_r, 1\}\le b_i-b_{i-1}<d_r+k$.
\end{theorem}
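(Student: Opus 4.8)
The goal is to prove Theorem \ref{TyB}, which is a type-B analogue of the type-A classification from \cite{Andrews2021}. The plan is to follow the structure of the type-A proof but carefully track the two structural differences: the strict inequalities $p_1 < \dots < p_j$ (distinct parts) and the replacement of the gap condition $d_r$ by $\max\{1, d_r\}$. I would organize the proof around verifying the three defining properties (i), (ii), (iii) of a SIP class with modulus $k$, with the main work being the unique decomposition in (ii).

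\textbf{Setup and finiteness.} First I would fix notation: given a partition $\pi = p_1 < \dots < p_j$ in $\mathcal P$, I want to produce its ``basal part'' $b_1 < \dots < b_j$ and a partition $p_1' \le \dots \le p_j'$ with all parts divisible by $k$, such that $p_i = b_i + p_i'$. The natural construction is greedy from the bottom: set $b_1 = c_r$ where $r$ is the residue of $p_1 \pmod k$ (note $b_1 \le p_1$ since $p_1 \equiv r$ and $p_1 \ge c_r$, and $p_1 - b_1 \equiv 0 \pmod k$). Then inductively, having defined $b_{i-1}$, let $s$ be the residue of $p_i \pmod k$ and choose $b_i$ to be the smallest integer $\ge b_{i-1} + \max\{d_s,1\}$ that is $\equiv s \pmod k$; this forces $\max\{d_s,1\} \le b_i - b_{i-1} < d_s + k$, and one checks $b_i \le p_i$ using that $p_i - p_{i-1} \ge \max\{d_s,1\}$ together with $p_{i-1} \ge b_{i-1}$ and a congruence argument. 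Property (i) is then immediate: a basal partition with $n$ parts is determined by the residue of its first part and the sequence of residues of later parts, each gap being one of finitely many values in $[\max\{d_r,1\}, d_r + k)$, so there are finitely many such partitions with $n$ parts. I would also need to check that the resulting $b$-partition genuinely lies in the claimed basis $\mathcal B$ (it does, by construction) and that the residual parts satisfy $p_1' \le \dots \le p_j'$ — this is the delicate point and the main obstacle.

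\textbf{Monotonicity of the residuals — the main obstacle.} The crux is showing $p_i' = p_i - b_i \ge p_{i-1} - b_{i-1} = p_{i-1}'$, i.e. $p_i - p_{i-1} \ge b_i - b_{i-1}$. This is where the $\max\{1, d_r\}$ modification is essential and where the type-A argument must be adapted. If $d_s \ge 1$ the inequality $b_i - b_{i-1} < d_s + k$ combined with $p_i - p_{i-1} \ge d_s$ does \emph{not} immediately give what we want; instead I would argue as in \cite{Andrews2021} that $b_i$ is chosen \emph{minimal} subject to the constraints, so $b_i - b_{i-1}$ is the smallest admissible gap with the correct residue, while $p_i - p_{i-1}$ is \emph{some} admissible gap with the same residue $s$ (since $p_i - p_{i-1} \ge \max\{d_s,1\}$ and $p_i - p_{i-1} \equiv b_i - b_{i-1} \pmod k$), hence $p_i - p_{i-1} \ge b_i - b_{i-1}$; dividing by $k$ after subtracting shows $k \mid p_i' - p_{i-1}'$ and $p_i' - p_{i-1}' \ge 0$. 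The role of $\max\{1,d_r\}$ rather than $d_r$ is precisely to guarantee that the ``distinct parts'' hypothesis $p_i > p_{i-1}$ is compatible with — indeed subsumed by — the gap condition, so that the set of admissible gaps is exactly $\{m : m \ge \max\{d_s,1\},\ m \equiv s\}$ with no extra exclusions, making the minimal-gap comparison clean. I expect this monotonicity step, and correctly handling the edge cases $i = 1$ and small $d_r$, to be where essentially all the care is needed.

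\textbf{Uniqueness, closure, and the basis description.} For uniqueness in (ii): suppose $\pi = (b_1 + p_1') + \dots + (b_j + p_j')$ with $(b_i) \in \mathcal B$ and $(p_i')$ nondecreasing and divisible by $k$. Since $b_1 \equiv p_1 \pmod k$ forces $b_1 = c_r$, and then each later $b_i$ is forced by the constraint $\max\{d_r,1\} \le b_i - b_{i-1} < d_r + k$ together with its residue being fixed (equal to that of $p_i$), the sequence $(b_i)$ is uniquely determined, hence so is $(p_i')$; one checks this forced sequence coincides with the greedy one above. For closure (iii): given any $(b_i) \in \mathcal B$ and any nondecreasing $(p_i')$ with $k \mid p_i'$, I must verify $\sum (b_i + p_i') \in \mathcal P$, i.e. that $p_i := b_i + p_i'$ is strictly increasing, has the right residues, satisfies $p_i \ge c_r$ when $p_i \equiv r$, and $p_i - p_{i-1} \ge \max\{d_r,1\}$; all four follow from the corresponding properties of $(b_i)$ plus $p_i - p_{i-1} = (b_i - b_{i-1}) + (p_i' - p_{i-1}') \ge b_i - b_{i-1} \ge \max\{d_r,1\}$ and $p_i \ge b_i \ge c_r$ (the latter using the added clause $b_i \ge c_r$ in the statement of $\mathcal B$). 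Finally, the explicit description of $\mathcal B$ in the theorem is exactly the image of the greedy construction restricted to $\mathcal B$ itself (where all $p_i' = 0$), so it is automatic once the decomposition is established. Throughout, I would cite Theorem \ref{th:fibonacci} only implicitly as motivation and otherwise keep the argument self-contained and parallel to Theorem 1 of \cite{Andrews2021}.
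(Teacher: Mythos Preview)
The paper does not give a proof of Theorem~\ref{TyB}; it is stated without argument, immediately followed by the definition of type~B classes and the examples. The implicit justification is that it is the direct analogue of Theorem~1 of \cite{Andrews2021} for type~A, with the obvious modifications. Your proposal is therefore not competing with any proof in the paper, and what you outline---the greedy bottom-up construction of the basal sequence, the minimal-gap comparison to get monotonicity of the residuals, and the routine checks of uniqueness and closure---is exactly the natural adaptation of Andrews' argument and is the right way to fill the gap.

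One point worth tightening: in your closure step (iii) you invoke the clause $b_i\ge c_r$ from the description of $\mathcal B$ to conclude $p_i\ge c_r$, but in your decomposition step (ii) you never verify that the greedy construction actually produces $b_i\ge c_r$ for $i>1$. When the $c_r$ are minimal (i.e.\ $c_r=r$, as in every example in the paper), this is automatic since any positive integer congruent to $r$ modulo $k$ is at least $r$; for general $c_r$ it can genuinely fail, and in fact the theorem as stated appears to require this minimality for (ii) and (iii) to be compatible. You should either add this hypothesis explicitly or note that the argument, like the paper's examples, is really about the case $c_r=r$.
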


\begin{definition} We will denote the SIP classes described in {Theorem \ref{TyB}} as type B of modulus $k$.
Those $j (\mod\, k)$ with $d_j=0$ we will call odd-like, and the number of odd-like residues we will denote $\ell$. In order to indicate the number $\ell$ along with modulus $k$ we will say that a given partition class of type B is of modulus $k_{\ell}$.

\end{definition}

\begin{example} The billiard partitions belong to a class of type B. We consider those which satisfy $E2$. The basis satisfies the axioms $BE2$ and $BE3$. In this case $c_1=1$, $c_2=2$, $d_1=0$, $d_2=1$, $k=2$, $\ell=1$, and modulus is $2_1$.
\end{example}

\begin{example}\label{ex:32} We consider the partition class $\mathcal P_{3,2}$ of type B with $k=3$, $c_1=1$, $c_2=2$, $c_3=3$, $d_1=0$, $d_2=0$, $d_3=1$, $\ell=2$. Thus, the modulus is $3_2$. If $f_d$ denotes the number of basal elements of $\mathcal P_{3,2}$ with $d$ parts,
then,
\begin{equation}\label{eq:32}
 f_{d+2}=2f_{d+1}+f_d.
\end{equation}
Here $f_1=1,\, f_2=3$, if we consider partitions with the smallest part equal to 3. If the smallest part is equal to 1 or 2, then $f_1=1,\, f_2=2$. If we don't make any restrictions on the smallest part, then $f_1=3,\, f_2=7$.
\end{example}
\begin{example}\label{ex:31} We consider the partition class $\mathcal P_{3,1}$ of type B with $k=3$, $c_1=1$, $c_2=2$, $c_3=3$, $d_1=0$, $d_2=1$, $d_3=1$, $\ell=1$. Thus, here the modulus is $3_1$. If $f_d$ denotes the number of basal elements of $\mathcal P_{3,1}$ with $d$ parts,
then,
\begin{equation}\label{eq:31}
f_{d+2}=2f_{d+1}+2f_d.
\end{equation}
Here $f_1=1,\, f_2=3$, if we consider partitions with the smallest part equal to 2 or 3. If the smallest part is equal to 1  then $f_1=1,\, f_2=2$. If we don't make any restrictions on the smallest part, then $f_1=3,\, f_2=8$.
\end{example}

\begin{theorem} Let $\mathcal P$ be a partition class of type B of modulus $k_{\ell}$, and suppose that all non-zero $d_j=1$. Let $f_d$ denote the number of elements of the basis $\mathcal B$ of $\mathcal P$ with $d$ parts. Then $f_d$ satisfies the difference equation
\begin{equation}\label{eq:fkl1}
f_{d+2}=T_{k,\ell}f_{d+1}-R_{k,\ell}f_d,
\end{equation}
where
\begin{equation}\label{eq:fkl2}
T_{k,\ell}=k-1, \quad R_{k,\ell}=\ell-k.
\end{equation}
Here $f_1=1,\, f_2=k$, if we consider partitions with the smallest part being fixed and not odd-like. If the smallest part is fixed and odd-like  then $f_1=1,\, f_2=k-1$. If there are no  restrictions on the smallest part, then $f_1=k,\, f_2=k^2-\ell$.
\end{theorem}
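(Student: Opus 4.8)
\noindent The plan is to mimic the proof of Theorem~\ref{th:fibonacci}: build all basis partitions with $d+1$ parts by appending a new largest part to those with $d$ parts, now bookkeeping the residue modulo $k$ of the largest part. The recursion \eqref{eq:fkl1} will emerge as the Cayley--Hamilton relation of an explicit $2\times2$ transfer matrix whose trace is $T_{k,\ell}$ and whose determinant is $R_{k,\ell}$.

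For $d\ge1$ let $A_d$ be the number of $\pi\in\mathcal B$ with $d$ parts whose largest part is congruent mod $k$ to an odd-like residue, and let $B_d$ be the number whose largest part is congruent to a non-odd-like residue, so that $f_d=A_d+B_d$. The crucial step is a one-step extension count. Fix $\pi=b_1+\dots+b_d\in\mathcal B$ with $b_d\equiv r\pmod k$; from the description of $\mathcal B$ in Theorem~\ref{TyB}, the partition $b_1+\dots+b_d+b_{d+1}$ with $b_{d+1}>b_d$ again lies in $\mathcal B$ exactly when, writing $g=b_{d+1}-b_d$ and $r'$ for the residue of $b_{d+1}$, one has $g\equiv r'-r\pmod k$, $\max\{d_{r'},1\}\le g<d_{r'}+k$, and $b_{d+1}\ge c_{r'}$; the last inequality is vacuous once the $c_r$ are the minimal residues (and in any case holds once $b_d\ge\max_r c_r$, i.e.\ for all $d$ beyond a fixed bound). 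Since every nonzero $d_j$ equals $1$, the set of admissible gaps is $\{1,\dots,k-1\}$ when $r'$ is odd-like and $\{1,\dots,k\}$ when $r'$ is non-odd-like. Because $k-1$ consecutive integers starting at $1$ realize every nonzero residue exactly once, while $k$ consecutive integers realize every residue exactly once, this yields exactly one admissible $b_{d+1}$ for each non-odd-like residue $r'$, exactly one for each odd-like $r'\ne r$, and none with $r'=r$ when $r$ is itself odd-like.

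Summing these contributions over the $\ell$ odd-like and $k-\ell$ non-odd-like target residues gives
\[
A_{d+1}=(\ell-1)A_d+\ell B_d,\qquad B_{d+1}=(k-\ell)A_d+(k-\ell)B_d,
\]
i.e.\ $\bigl(A_{d+1},B_{d+1}\bigr)^{\mathsf{T}}=M\,\bigl(A_d,B_d\bigr)^{\mathsf{T}}$ with $M=\left(\begin{smallmatrix}\ell-1&\ell\\ k-\ell&k-\ell\end{smallmatrix}\right)$. A short computation gives $\mathrm{tr}\,M=(\ell-1)+(k-\ell)=k-1=T_{k,\ell}$ and $\det M=(\ell-1)(k-\ell)-\ell(k-\ell)=-(k-\ell)=R_{k,\ell}$, so by Cayley--Hamilton $M^2=T_{k,\ell}M-R_{k,\ell}I$; applying this to $(A_d,B_d)^{\mathsf{T}}$ and adding coordinates shows that $f_d=A_d+B_d$ satisfies \eqref{eq:fkl1} with $T_{k,\ell},R_{k,\ell}$ as in \eqref{eq:fkl2}. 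The three sets of initial conditions follow by inspection from the extension count: if the smallest part is fixed with a non-odd-like residue then $f_1=1$ and the single partition $(c_r)$ admits exactly $k$ extensions, so $f_2=k$; if the smallest part is fixed and odd-like then $f_1=1$ and $f_2=k-1$; and with no restriction on the smallest part $f_1=k$ while $f_2=\ell(k-1)+(k-\ell)k=k^2-\ell$. As a sanity check, $k=2,\ \ell=1$ gives $M=\left(\begin{smallmatrix}0&1\\1&1\end{smallmatrix}\right)$, recovering Theorem~\ref{th:fibonacci}.

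The step I expect to be the main obstacle is the one-step extension count: one has to track carefully that the admissible-gap window is governed by the residue of the \emph{new} largest part, and that this window is a full block of $k$ consecutive integers for non-odd-like residues but only $k-1$ for odd-like ones. This single-integer discrepancy is precisely what breaks the symmetry between the two rows of $M$ and thereby produces the $-R_{k,\ell}=k-\ell$ correction in the recursion. The only other technical point is disposing of the lower bounds $b_{d+1}\ge c_{r'}$, which is harmless for the canonical choice $c_r\equiv r$ with $1\le c_r\le k$ and otherwise affects only finitely many small $d$, absorbed into the initial data.
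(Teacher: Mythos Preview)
Your argument is correct and follows essentially the same route as the paper: both split $f_d=A_d+B_d$ according to whether the largest part has odd-like residue, derive the identical system $A_{d+1}=(\ell-1)A_d+\ell B_d$, $B_{d+1}=(k-\ell)(A_d+B_d)$, and then reduce to the scalar recursion \eqref{eq:fkl1}. Your use of Cayley--Hamilton on the transfer matrix $M$ is a clean way to read off $T_{k,\ell}=\mathrm{tr}\,M$ and $R_{k,\ell}=\det M$, where the paper simply substitutes; and your explicit justification of the one-step extension count and the remark on the lower bound $b_{d+1}\ge c_{r'}$ fill in details the paper leaves implicit.
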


\begin{proof} One can easily see that
\begin{equation}\label{eq:fd1}
f_{d+1}=(k-1)\cdot a_d + k\cdot b_d, \quad d\ge 1,
\end{equation}
where
\begin{equation}\label{eq:fd2}
\begin{aligned}
a_{d+1}&=(\ell-1)\cdot a_d+\ell\cdot b_d,\\
b_{d+1}&=(k-\ell)(a_d+b_d).
\end{aligned}
\end{equation}
Here $a_d$ denotes the number of elements of the basis with $d$ parts ending on an ``odd digit", and $b_d$ the number of the rest of the elements
 of the basis with $d$ parts. Thus,
 $$
 f_d=a_d+b_d.
 $$
By substituting \eqref{eq:fd1} and \eqref{eq:fd2} into \eqref{eq:fkl1}, one gets \eqref{eq:fkl2}.
\end{proof}

The sequences satisfying the general difference equations of the form \eqref{eq:fkl1} are called {\it Lucas sequences}.
\begin{example} For the billiard partitions, where $k=2, \ell=1$ we get the Fibonacci sequence $T_{2,1}=1$, $R_{2,1}=-1$ as in \eqref{eq:fibonacci}.
\end{example}

\begin{example} For the partition classes of type B of modulus $3_1$ as in Example \ref{ex:31} we get the  sequence $T_{3,1}=2$, $R_{3,1}=-2$ as in \eqref{eq:31}.
\end{example}

\begin{example} For the partition classes of type B of modulus $3_2$ as in Example \ref{ex:32} we get the  sequence $T_{3,2}=2$, $R_{3,2}=-1$ as in \eqref{eq:32}. So-called Pell's numbers are generated by Lucas sequences with the same coefficients $2, -1$ and with the initial data $f_0=0, f_1=1$.

\end{example}

We can generalize the last examples in the following ways.

\begin{example} Consider a partition class of type B of modulus $k_1$. Then $T_{k, 1}=k-1=-R_{k,1}$ in the recurrence relation \eqref{eq:fkl1} satisfied by the sequence $f_d$ of the numbers of basal partitions with $d$ parts.

 Consider a partition class of type B of modulus $k_{k-1}$. Then $T_{k, k-1}=k-1$ and $R_{k,k-1}=-1$ in the recurrence relation \eqref{eq:fkl1} satisfied by the sequence $f_d$ of the numbers of basal partitions with $d$ parts.

{Taking into account that for billiard partitions $(T, R)=(1,-1)$ corresponds to  the Fibonacci sequence \eqref{eq:fibonacci},
there are two natural generalizations: $(T, R)=(k-1,-(k-1))$ and  $(T, R)=(k-1,-1)$.}
 \end{example}

\subsection{Computation of number of refined basal elements}
Here we give direct computation of the number of basal elements, with the refinement, via generating series. We begin with the case of billiard partitions.

Let $s(d,m)$ be the generating function of basal billiard partitions with $d$ parts, and largest part $m$. To simplify notation, we simply set $x=1$, i.e. we don't keep {explicitly} the track of the weight, since it follows straightforward from $d$ and $m$. Then as in \cite{ADR2021} we have the obvious recursions:
\begin{eqnarray}
s(d,2n)&=&q^{2n}(s(d-1,2n-2)+s(d-1,2n-1)),\\
s(d,2n+1)&=&q^{2n+1}s(d-1,2n).
\end{eqnarray}
This gives
\begin{equation}\label{pom1}
s(d,2n)=q^{2n}s(d-1,2(n-1))+q^{4n-1}s(d-2,2(n-1)).
\end{equation}
Let
$$s_n(z)=\sum_{d\in\Z} s(d,2n) z^d.$$
Note that the above sum is in fact finite, since $s(d,2n)$ can be nonzero only when $d+1\le 2n \le 2d$. From (\ref{pom1}) we get:
$$s_n(z)=(q^{2n}z+q^{4n-1}z^2)s_{n-1}(z)=q^{2n}z(1+q^{2n-1}z) s_{n-1}(z),\quad n\ge 2,$$
with $s_1(z)=q^2z$. Therefore, for $n\ge 1$:
$$s_n(z)=q^2z\prod_{i=2}^n q^{2i}z(1+q^{2i-1}z) =q^{n(n+1)}z^n\prod_{i=1}^{n-1} (1+q^3z q^{2(i-1)}),$$
which by the quantum binomial formula gives:
$$s_n(z)=q^{n(n+1)}z^n\sum_{k=0}^{n-1}q^{3k}z^k q^{k^2-k}\left[
\begin{array}{c}
n-1\\
k
\end{array}
\right]_{q^2}=\sum_{d=n}^{2n-1} q^{2n^2-2dn-n+d^2+2d}\left[
\begin{array}{c}
n-1\\
d-n
\end{array}
\right]_{q^2}\, z^d,$$
where we introduced the summation index $d=n+k$. This gives the desired explicit formula for {$s(d,2n)$.}

\paragraph{Computations for basal elements of $\mathcal{P}_{3,2}$}
 Let $t(d,m)$ denote the generating function for the basal partitions from $\mathcal{P}_{3,2}$ with $d$ parts, and largest part $m$. Then from the definition we have the following recursion relations:
\begin{eqnarray}
t(d,3n)&=&q^{3n}(t(d-1,3n-3)+t(d-1,3n-2)+t(d-1,3n-1)), \label{t1}\\
t(d,3n+1)&=&q^{3n+1}(t(d-1,3n-1)+t(d-1,3n)), \label{t2}\\
t(d,3n+2)&=&q^{3n+2}(t(d-1,3n)+t(d-1,3n+1)). \label{t3}
\end{eqnarray}
By using (\ref{t3}), the expression (\ref{t1}) becomes
\begin{equation}\label{t4}
t(d,3n)=q t(d,3n-1)+q^{3n}t(d-1,3n-1).
\end{equation}
Further replacing of (\ref{t4}) in (\ref{t2}) gives:
\begin{equation}\label{t5}
t(d,3n+1)=(q^{3n+1}+ q^{3n+2}) t(d-1,3n-1)+q^{6n+1}t(d-2,3n-1).
\end{equation}
Finally, by (\ref{t4}) and (\ref{t5}), the expression (\ref{t3}) gives:
\begin{equation}\label{t6}
t(d,3n+2)=q^{3n+3} t(d-1,3n-1)+q^{6n+2}(1+q+q^2)t(d-2,3n-1)+q^{9n+3}t(d-3,3n-1).
\end{equation}

Let \begin{eqnarray}
c_n(z)&=&\sum_{d\in\Z}t(d,3n+2)z^d\label{c}\\
b_n(z)&=&\sum_{d\in\Z}t(d,3n+1)z^d\label{b}\\
a_n(z)&=&\sum_{d\in\Z}t(d,3n)z^d.\label{a}
\end{eqnarray}
Then from (\ref{t6}) we have
$$c_n(z)=(q^{3n+3}z+q^{6n+2}(1+q+q^2)z^2+q^{9n+3}z^3)c_{n-1}(z),\quad\textrm{  for }\quad n\ge 2.$$  The initial condition is
$$c_1(z)=\sum_{d\in\Z}t(d,5)z^d=q^8z^2+q^{12}z^3.$$
Therefore for $n\ge 1$, we have
\begin{equation}\label{pom2}\sum_{d\in\Z}t(d,3n+2)z^d=c_n(z)=(q^8z^2+q^{12}z^3)\prod_{i=2}^n(q^{3i+3}z+q^{6i+2}(1+q+q^2)z^2+q^{9i+3}z^3).\end{equation}

From (\ref{c})-(\ref{a}) and by (\ref{t4})
 and (\ref{t5}), we have
$$\sum_{d\in\Z}t(d,3n)z^d=a_n(z)=(q+q^{3n}z)c_{n-1}(z),$$
$$\sum_{d\in\Z}t(d,3n+1)z^d=b_n(z)=((q^{3n+1}+q^{3n+2})z +q^{6n+1}z^2)c_{n-1}(z).$$

 In the lack of explicit quantum multinomial formula, {let us} focus on the classical limit
 $q\to 1$ in $t(d,3n+2)$ i.e. $c_n(z)$. Then from (\ref{pom2}) we have
   \begin{equation}\label{pom3}c_n(z)_{\mid_{q=1}}=(z^{n+1}+z^{n+2})(1+3z+z^2)^{n-1}= (z^{n+1}+z^{n+2})\sum_{i+j+k=n-1}\frac{(n-1)!}{i!j!k!}3^iz^{i+2j}.\end{equation}

We note that similar computations where obtained in \cite{Andrews2021}, for similar classes of separable integer partitions with recursion relations of order 3.


\section{Relationship with quiver generating series}\label{quivers}

\subsection{Generating series for basal billiard partitions}\label{basals}
Using variable $a$ instead of $x$ here, the generating series for the basal billiard partitions is given by:
\begin{equation*}
p_{\mathcal B}(a,q)=1+\sum_{n\ge1, m\ge 0}p_{\mathcal B}(m, n)q^n =1+\sum_{d=1}^\infty \sum_{n=0}^\infty s(d, n),
\end{equation*}
where
\begin{equation*}
s(d, 2n)= a^{2n-d-1}q^{2n^2-2dn-n+d^2+2d}\left[
\begin{array}{c}
n-1\\
2n-d-1
\end{array}
\right]_{q^2};
\end{equation*}
\begin{equation*}
s(d, 2n+1)= a^{2n-d}q^{2n^2-2dn-n+d^2+3n}\left[
\begin{array}{c}
n-1\\
2n-d
\end{array}
\right]_{q^2}.
\end{equation*}
Let us focus first on the even ones:
\begin{equation*}
p_{\mathcal B}^{even}(a,q) =1+\sum_{d=1}^\infty \sum_{n=0}^\infty s(d, 2n).
\end{equation*}
Instead of simply using $s(d,2n)$, let us use further stratification $S(d,2n)$, where we also keep track of the largest element $n$:
\begin{equation*}
S(d, 2n)= x^{n-1} a^{2n-d-1}q^{2n^2-2dn-n+d^2+2d}\left[
\begin{array}{c}
n-1\\
2n-d-1
\end{array}
\right]_{q^2}.
\end{equation*}
Then let
\begin{equation*}
P_{\mathcal B}^{even}(a,q) =1+\sum_{d=1}^\infty \sum_{n=0}^\infty S(d, 2n)=\sum_{n\le d<2n}x^{n-1} a^{2n-d-1}q^{2n^2-2dn-n+d^2+2d}\left[
\begin{array}{c}
n-1\\
2n-d-1
\end{array}
\right]_{q^2}.
\end{equation*}
After change of variables:
\begin{eqnarray*}
i &=&2n-d-1,\\
j&=& d-n,
\end{eqnarray*}
we get:
\begin{eqnarray}
P_{\mathcal B}^{even}(q,a,x) &=&\sum_{i,j\ge 0}x^{i+j} a^{i}q^{i^2+2ij+2j^2+3i+5j+2}\left[
\begin{array}{c}
i+j \nonumber\\
i
\end{array}
\right]_{q^2}=\\
&=&\quad q^2 \sum_{i,j\ge 0}x^{i+j} a^{i}q^{i^2+2ij+2j^2} q^{3i+5j}\frac{(q^2;q^2)_{i+j}}{(q^2;q^2)_{i}(q^2;q^2)_{j}}. \label{eq:fS1}
\end{eqnarray}
This is now presented in the form of a quiver generating series.
\begin{remark}The overall factor $q^2$ comes from the fact that the minimal element in billiard partitions is 2, and can be neglected from now on.
\end{remark}

\subsection{Quiver generating series}
For a symmetric quiver $Q$, with $m$ nodes and adjacency matrix $C$, the corresponding quiver generating series is given by:
\begin{equation}\label{qs1}
\bar{P}_{Q}(x_1,\ldots,x_m;q)=\sum_{d_1,...,d_m\ge 0} (-q)^{\sum_{i,j}C_{ij}d_id_j} \frac{x_1^{d_1}\ldots x_m^{d_m}}{\prod_{i=1}^m (q^2;q^2)_{d_i}}.
\end{equation}

We also say that a power series $P$ is in the quiver form if it is in the form
\begin{equation}\label{qs2}
P_{Q}(x_1,\ldots,x_m;q)=\sum_{d_1,...,d_m\ge 0} (-q)^{\sum_{i,j}C_{ij}d_id_j} \frac{(q^2;q^2)_{d_1+\ldots+d_m}}{\prod_{i=1}^m (q^2;q^2)_{d_i}}{x_1^{d_1}\ldots x_m^{d_m}}.
\end{equation}

The knots-quivers correspondence, introduced in \cite{KRSS}, relates the generating series of the colored HOMFLY-PT polynomials of a given knot,
with the quiver generating series for a corresponding quiver. In particular, the two quiver forms from above correspond to the unreduced and reduced version
of colored HOMFLY-PT invariants, respectively.\\

Let $Q$ be the following two-vertex quiver:
\begin{center}
\begin{tikzpicture}[scale=1,>=angle 45]
\draw[thick,->] (-1.3,0.2) .. controls (0,0.75)..(1.3,0.2);
\draw[thick,->] (1.3,-0.2) .. controls (0,-0.75)..(-1.3,-0.2);
\draw [fill] (-1.5,0) circle (0.17cm);
\draw [fill] (1.5,0) circle (0.17cm);
\draw[thick] (-1.6,-0.26) arc (325:35:0.45cm);
\draw[thick] (-1.65,-0.17) arc (320:40:0.25cm);
\draw[thick] (1.6,-0.26) arc (-145:145:0.45cm);
\end{tikzpicture}
\end{center}
The corresponding adjacency matrix $C$ is $2\times 2$ matrix with $(i,j)$ entry being the number
of arrow from vertex $i$ to vertex $j$, and is therefore given by
\begin{equation}\label{qc2}
C=\left[\begin{array}{cc}
2&1\\
1&1
\end{array}\right].
\end{equation}

One of the main results of this paper is that the generating series of the even basal billiard partitions can be recognised to be in the quiver form (\ref{qs2}). Remarkably, the corresponding quiver is precisely the quiver $Q$.

\begin{proposition}\label{prop2}
Let $P_{\mathcal B}^{even}(q,a,x)$ be a generating series for even basal billiard partitions, as in Section \ref{basals}.
Let $Q$ be the two-vertex quiver from above. Then after setting $$x_1=q^5x,\quad x_2=-aq^3x,$$
we get that the quiver generating series of $Q$ matches the generating series for even basal billiard partitions
\begin{equation}
P_{\mathcal B}^{even}(q,a,x)=P_{Q}(x_1,x_2;q)_{| x_1=q^5x,\, x_2=-aq^3x} .
\end{equation}
\end{proposition}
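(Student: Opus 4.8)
The plan is to start from the reduced quiver form \eqref{qs2} for the two-vertex quiver $Q$ with adjacency matrix $C$ as in \eqref{qc2}, carry out the substitution $x_1 = q^5 x$, $x_2 = -aq^3 x$, and match the outcome term by term against the series \eqref{eq:fS1} derived in Section~\ref{basals}. No deep input is needed beyond \eqref{eq:fS1} and the Remark that follows it; the proof is a bookkeeping identity between two double series.

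First I would expand the quadratic exponent. From $C = \left[\begin{array}{cc}2&1\\1&1\end{array}\right]$ one gets $\sum_{i,j} C_{ij} d_i d_j = 2d_1^2 + 2d_1 d_2 + d_2^2$, so
\[
P_Q(x_1,x_2;q) = \sum_{d_1,d_2\ge 0} (-q)^{2d_1^2+2d_1d_2+d_2^2}\, \frac{(q^2;q^2)_{d_1+d_2}}{(q^2;q^2)_{d_1}(q^2;q^2)_{d_2}}\, x_1^{d_1} x_2^{d_2}.
\]
The next step is to resolve the sign: since $2d_1^2 + 2d_1 d_2$ is even and $(-1)^{d_2^2} = (-1)^{d_2}$, we have $(-q)^{2d_1^2+2d_1d_2+d_2^2} = (-1)^{d_2}\, q^{2d_1^2+2d_1d_2+d_2^2}$. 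Substituting $x_1 = q^5 x$ and $x_2 = -aq^3 x$ contributes $x_1^{d_1} x_2^{d_2} = (-1)^{d_2} a^{d_2}\, q^{5d_1 + 3d_2}\, x^{d_1+d_2}$, and the two factors $(-1)^{d_2}$ cancel. Hence
\[
P_Q(x_1,x_2;q)\big|_{x_1 = q^5 x,\, x_2 = -aq^3 x} = \sum_{d_1,d_2 \ge 0} a^{d_2}\, q^{2d_1^2 + 2d_1 d_2 + d_2^2 + 5d_1 + 3d_2}\, x^{d_1+d_2}\, \frac{(q^2;q^2)_{d_1+d_2}}{(q^2;q^2)_{d_1}(q^2;q^2)_{d_2}}.
\]

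Finally, relabelling the summation indices by $d_1 = j$ and $d_2 = i$ turns the exponent of $q$ into $2j^2 + 2ij + i^2 + 5j + 3i = i^2 + 2ij + 2j^2 + 3i + 5j$, the power of $a$ into $a^i$, the power of $x$ into $x^{i+j}$, and the Pochhammer factor into $(q^2;q^2)_{i+j}/\big((q^2;q^2)_i (q^2;q^2)_j\big)$ --- exactly the summand of \eqref{eq:fS1} up to the overall constant $q^2$, which by the Remark following \eqref{eq:fS1} has already been dropped from $P_{\mathcal B}^{even}$. Thus the two series coincide. The argument is purely computational; the only spot demanding attention is the sign bookkeeping --- verifying both the identity $(-1)^{2d_1^2+2d_1d_2+d_2^2} = (-1)^{d_2}$ (which relies on the cross term $2d_1d_2$ being even, a feature of this particular $C$) and that this sign is precisely cancelled by the one introduced through $x_2 = -aq^3 x$. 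Everything else is a routine matching of exponents of $a$, $q$, $x$ and of the Pochhammer factors under the linear change of indices $(d_1,d_2) \leftrightarrow (j,i)$.
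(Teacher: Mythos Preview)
Your proof is correct and follows exactly the route the paper intends: the paper derives \eqref{eq:fS1} in Section~\ref{basals}, remarks that the overall $q^2$ may be dropped, and then simply asserts that this expression is in quiver form for the matrix \eqref{qc2}; your write-up supplies the explicit bookkeeping (sign resolution and the relabelling $(d_1,d_2)\leftrightarrow(j,i)$) that the paper leaves to the reader.
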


This particular two-vertex quiver $Q$ from Proposition \ref{prop2}, and its quiver generating series, turns out to be interesting for various reasons. First of all, under the knots-quivers correspondence, the quiver generating series (\ref{qs1}) of this quiver corresponds to the series of the unreduced colored HOMFLY-PT polynomials of the 1-framed unknot:
\begin{center}
\begin{tikzpicture}[scale=1,>=angle 45]
\draw[thick] plot [smooth] coordinates{(0.86,1.15) (0,2) (0,0) (2,2) (2,0) (1.14,0.85)};
\end{tikzpicture}
\end{center}
And secondly, it was shown in \cite{PSS} that from the quiver generating series of this quiver, one can naturally count
the Schr\"oder paths -- the lattice paths in the first quadrant below the diagonal $y=x$, so that each step can be either to the right,
up, or diagonal:

\begin{center}
\begin{tikzpicture}[scale=1.5]
\draw[step=1,gray,very thin] (0,0) grid (3.3,3.3);
\draw[thick,dashed] (0,0)--(3.3,3.3);
\draw[thick,gray] (0,0)--(3.3,0);
\draw[thick,gray] (0,0)--(0,3.3);b
\draw[very thick] (0,0) --(1,0)--(2,1)--(3,1)--(3,3);
\draw (0,0) circle (0.05);
\draw (3,3) circle (0.05);
\end{tikzpicture}
\end{center}

Each such path can be counted with its weight: to a Schr\"oder path starting at $(0,0)$ and ending at $(n,n)$, with $D$ diagonal steps, and
such that the area between the path and the diagonal $y=x$ is equal to $A$, we associate a weight: $a^D q^{2A} x^n$. For example, the path from the figure above has  weight
$a^1 q^6 x^3$.


A surprising result from \cite{PSS} shows that the quotient:
$$\frac{\bar{P}_Q(x_1 q,x_2 q,q)}{\bar{P}_Q(x_1 q^{-1},x_2 q^{-1},q)},$$
for the above two-vertex quiver $Q$, after setting $x_1=x$, $x_2=ax$, becomes equal, up to an overall factor, to the generating series of weighted Schr\"oder paths.\\

\subsection{Conclusion}
The results of this paper suggest a direct relationship between billiard partitions and quiver generating series for a specific quiver, and count of Schr\"oder paths. This rises questions about the relations between various different counts and combinatorial constructions arising from billiard partitions, quiver generating series and knots-quivers correspondence. In particular, one interesting question would be to understand whether Donaldson-Thomas invariants, arising from a product form of the quiver generating series, have some counterpart in billiard partitions. More detailed analysis and study of all these possible relationships is postponed for the future work.

As a final, low-road comment, it is interesting to note that the relationship described in this paper contains a curious connection between the two famous combinatorial sequences -- Fibonacci numbers, and Catalan numbers. For billiard partitions, the count is always a natural refinement of Fibonacci numbers, as seen in \cite{ADR2021} and also in this paper, e.g. Theorem \ref{th:fibonacci}. On the other hand, in the counting combinatorics motivated by knots-quivers correspondence and related BPS count (see \cite{PSS} or \cite{KucSul}), Catalan numbers appear naturally, together with their refinements, like the count of Schr\"oder paths. Therefore, it is quite amusing to see that the two different enumerations that are natural refinements of Fibonacci numbers and Catalan numbers, are in fact directly related.

\subsection*{Acknowledgment} {The authors are grateful to the referees for helpful remarks and suggestions.
This work has been partially supported by the Science Fund of the Republic of Serbia, Projects no. 7744592, MEGIC -- ``Integrability and Extremal Problems in Mechanics, Geometry and Combinatorics" (V.D.) and no. 7749891, GWORDS -- ``Graphical Languages" (M.S.). V.D. was also partially supported by the Simons
Foundation grant no. 854861, and M.S. was also partially supported by Funda\c{c}\~ao para a Ci\^encia e a Tecnologia (FCT) through
Exploratory Grant  EXPL/MAT-PUR/0584/2021, and CEEC grant with DOI no. 10.54499/2020.02453.CEECIND/CP1587/CT0007.

\begin{bibdiv}

\addcontentsline{toc}{section}{References}
\begin{biblist}

\bib{ADR2021}{article}{
	AUTHOR = {Andrews, George E.},
AUTHOR = {Dragovi\'{c}, Vladimir},
AUTHOR = {Radnovi\'{c},
		Milena},
	TITLE = {Combinatorics of the periodic billiards within quadrics},
	JOURNAL = {The Ramanujan Journal},
	FJOURNAL = {},
	VOLUME = {61},
	YEAR = {2023},
	NUMBER = {1},
	PAGES = {135--147},
	ISSN = {},
	MRCLASS = {},
	MRNUMBER = {},
	DOI = {10.1007/s11139-020-00346-y},
	URL = {},
}

\bib{Andrews2021}{article}{
	AUTHOR = {Andrews, George E.},
	TITLE = {Separable Integer Partition Classes},
	JOURNAL = {Trans. Amer. Math. Soc. Ser. B},
	FJOURNAL = {Transactions of American Mathematical Society Series B},
	VOLUME = {9},
	YEAR = {2022},
	NUMBER = {},
	PAGES = {619-647},
	ISSN = {},
	MRCLASS = {},
	MRNUMBER = {},
	DOI = {},
	URL = {},
}	

\bib{Andrews}{book}{
    author={Andrews, George E.},
    title={The Theory of Partitions},
    publisher={Addison-Weslay},
    address={Readings},
     date={1976},

}

\bib{Chern2021}{article}{
	AUTHOR = {Chern, Shane},

	TITLE = {A different look at Euclidean billiard partitions},
	JOURNAL = {},
	FJOURNAL = {},
	VOLUME = {},
	YEAR = {2021},
	NUMBER = {},
	PAGES = {},
	ISSN = {},
	MRCLASS = {},
	MRNUMBER = {},
	DOI = {},
	URL = {},
}	

\bib{DragRadn2018}{article}{
	AUTHOR = {Dragovi\'{c}, Vladimir},
        AUTHOR = {Radnovi\'{c}, Milena},
	TITLE = {Periodic {E}llipsoidal {B}illiard {T}rajectories and
		{E}xtremal {P}olynomials},
	JOURNAL = {Comm. Math. Phys.},
	FJOURNAL = {Communications in Mathematical Physics},
	VOLUME = {372},
	YEAR = {2019},
	NUMBER = {1},
	PAGES = {183--211},
	ISSN = {0010-3616},
	MRCLASS = {37D50 (33C47)},
	MRNUMBER = {4031799},
	DOI = {10.1007/s00220-019-03552-y},
	URL = {https://doi.org/10.1007/s00220-019-03552-y},
}

\bib{E}{article}{
	AUTHOR = {Efimov, A. I.},
        TITLE = {Cohomological Hall algebra of a symmetric quiver},
	JOURNAL = {Compositio Mathematica},
	FJOURNAL = {Compositio Mathematica},	
	VOLUME = {148},
	YEAR = {2012},
	NUMBER = {4},
	PAGES = {1133--1146},
}

\bib{EKL}{article}{
	AUTHOR = {Ekholm, Tobias},
        AUTHOR = {Kucharski, Piotr},
	AUTHOR = {Longhi, Pietro},
	TITLE = {Physics and geometry of knots-quivers correspondence},
	JOURNAL = {Commun. Math. Phys.},
	FJOURNAL = {Communications in Mathematical Physics},
	VOLUME = {379},
	YEAR = {2020},
	NUMBER = {2},
	PAGES = {361-415},
	DOI = {10.4310/ATMP.2019.v23.n7.a4},
	URL = {https://dx.doi.org/10.4310/ATMP.2019.v23.n7.a4},
}

\bib{FR}{article}{
	AUTHOR = {Franzen, H.},
        AUTHOR = {Reineke, Markus},
	TITLE = {Semi-stable Chow-Hall algebras of quivers and quantized Donaldson-Thomas invariants},
	JOURNAL = {Alg. Number Th.},
	FJOURNAL = {Alg. Number Th.},
	VOLUME = {12},
	YEAR = {2018},
	NUMBER = {5},
	PAGES = {1001-1025},
	}

\bib{KS}{article}{
	AUTHOR = {Kontsevich, Maxim},
        AUTHOR = {Soibelman, Yan},
        TITLE = {Cohomological Hall algebra, exponential Hodge structures and motivic Donaldson-Thomas invariants},
	JOURNAL = {Commun. Number Theory Phys.},
	FJOURNAL = {Communications in Number Theory and Physics},
	VOLUME = {5},
	YEAR = {2011},
	NUMBER = {2},
	PAGES = {231--252},
 	DOI = {10.4310/CNTP.2011.v5.n2.a1},
	URL = {https://dx.doi.org/10.4310/CNTP.2011.v5.n2.a1},
}

\bib{KRSS}{article}{
	AUTHOR = {Kucharski, Piotr},
        AUTHOR = {Reineke, Markus},
	AUTHOR = {Sto\v si\'{c}, Marko},
        AUTHOR = {Su\l kowski, Piotr},
	TITLE = {Knots-quivers correspondence},
	JOURNAL = {Adv. Theor. Math. Phys.},
	FJOURNAL = {Advances in Theoretical and Mathematical Physics},
	VOLUME = {23},
	YEAR = {2019},
	NUMBER = {7},
	PAGES = {1849-1902},
	DOI = {10.4310/ATMP.2019.v23.n7.a4},
	URL = {https://dx.doi.org/10.4310/ATMP.2019.v23.n7.a4},
}

\bib{KucSul}{article}{
	AUTHOR = {Kucharski, Piotr},
        AUTHOR = {Su\l kowski, Piotr},
	TITLE = {BPS counting for knots and combinatorics on words},
	JOURNAL = {JHEP},
	FJOURNAL = {Journal of High Energy Physics},
	VOLUME = {11},
	YEAR = {2016},
	NUMBER = {120},
}

\bib{MR}{article}{
	AUTHOR = {Meinhardt, S.},
        AUTHOR = {Reineke, Markus},
	TITLE = {Donaldson-Thomas invariants versus intersection cohomology of quiver moduli},
	JOURNAL = {J. Reine Angew. Math.},
	FJOURNAL = {J. Reine Angew. Math.},
	VOLUME = {2019},
	YEAR = {2019},
	NUMBER = {754},
	PAGES = {142-178},
	}

\bib{PSS}{article}{
	AUTHOR = {Panfil, Mi\l osz},
        AUTHOR = {Sto\v si\'{c}, Marko},
        AUTHOR = {Su\l kowski, Piotr},
	TITLE = {Donaldson-Thomas invariants, torus knots, and lattice paths},
	JOURNAL = {Phys. Rev. D},
	FJOURNAL = {Physical Review D},
	VOLUME = {98},
	YEAR = {2018},
	NUMBER = {026022},
	PAGES = {1-28},
	ISSN = {2470-0010},
	DOI = {10.1103/PhysRevD.98.026022},
	URL = {https://doi.org/10.1103/PhysRevD.98.026022},
}

\bib{rei}{article}{
        AUTHOR = {Reineke, Markus},
	TITLE = {Cohomology of quiver moduli, functional equations, and integrality of Donaldson-Thomas type invariants},
	JOURNAL = {Compositio Mathematica},
	FJOURNAL = {Compositio Mathematica},
	VOLUME = {147},
	YEAR = {2011},
	NUMBER = {5},
	PAGES = {943-964},
}

\bib{rei2}{article}{
        AUTHOR = {Reineke, Markus},
	TITLE = {Degenerate Cohomological Hall algebra and quantized Donaldson-Thomas invariants for $m$-loop quivers},
	JOURNAL = {Doc. Math.},
	FJOURNAL = {Doc. Math.},
	VOLUME = {17},
	YEAR = {2012},
	NUMBER = {1},

}	
	
\end{biblist}
\end{bibdiv}

\end{document}